\documentclass{IOS-Book-Article}

\usepackage{makeidx}
\usepackage{graphicx}
\usepackage{latexsym}
\usepackage{xspace}
\usepackage{euscript}
\usepackage{amsmath}
\usepackage{color}
\usepackage{amssymb}
\usepackage[latin1]{inputenc}

\usepackage[latin1]{inputenc}
\usepackage{times}
\normalfont
\usepackage[T1]{fontenc}

\usepackage{amsfonts}

\usepackage[latin1]{inputenc}
\usepackage{url,color}

\newtheorem{theorem}{Theorem}

\newtheorem{lemma}{Lemma}

\newtheorem{defi}{Definition}

\newtheorem{ex}{Example}

\newenvironment{proof}{ \noindent \textit{Proof:} }
{\hfill$\Box$\endtrivlist}

\hyphenation{Pre-di-ca-te}
\hyphenation{in-di-vi-dual}
\hyphenation{sa-tis-fia-ble}
\hyphenation{na-med}
\hyphenation{between}
\hyphenation{equa-li-ties}
\hyphenation{pro-gram-ming}
\hyphenation{rea-so-na-ble}
\hyphenation{ne-ga-tion}
\hyphenation{struc-tu-res}
\hyphenation{theo-ries}
\hyphenation{ma-the-ma-ti-cal}
\hyphenation{ar-ti-fi-cial}
\hyphenation{a-sso-cia-ted}
\hyphenation{re-search}
\hyphenation{par-tial-ly}
\hyphenation{pro-po-si-tio-nal}
\hyphenation{exis-ten-ce}
\hyphenation{under}
\hyphenation{Pilar}

\begin{document}

\begin{frontmatter}

\title{Term Models of Horn Clauses over Rational Pavelka Predicate Logic}

\runningtitle{Term Models of Horn Clauses over Rational Pavelka Predicate Logic}

\runningauthor{Costa}

\author[A,B]{\fnms{Vicent} \snm{Costa}},
\author[A,B,C]{\fnms{Pilar} \snm{Dellunde}}

\address[A]{Universitat Aut\`onoma de Barcelona\\
}

\vspace{-0.4cm}

\address[B]{Artificial Intelligence Research Institute (IIIA - CSIC)\\
Campus UAB, 08193 Bellaterra, Catalonia \\
}

\vspace{-0.4cm}

\address[C]{Barcelona Graduate School of Mathematics\\
\email{vicent@iiia.csic.es}\\
\email{pilar.dellunde@uab.cat}\\}

\begin{abstract}
This paper is a contribution to the study of the universal Horn fragment of predicate fuzzy logics, focusing on the proof of the existence of free models of theories of  Horn clauses over Rational Pavelka predicate logic. We define the notion of a term structure associated to every consistent theory T over Rational Pavelka predicate logic and we prove that the term models of T are free on the class of all models of T. Finally, it is shown that if T is a set of Horn clauses, the term structure associated to T is a model of T. 
\end{abstract}

\begin{keyword}  Horn clause \sep term model \sep free model \sep Rational Pavelka predicate logic.

\end{keyword}

\end{frontmatter}

\maketitle

\section{Introduction}

\noindent Free models and Horn clauses have a relevant role in classical logic and logic programming. On the one hand, free models, which appeared first in category theory (see for instance \cite[Def. 4.7.17]{BaWe98}), are crucial in universal algebra and, thereby, in model theory. In the context of logic programming, free structures, introduced in \cite{GoThWaWr75} and also named \emph{initial} (as for instance in \cite[Def. 2.1 (i)]{Mak87}), are important in logic programming, since they  allow a procedural interpretation of the programs and admitting free structures makes reasonable the \emph{negation as failure} (see for instance \cite{Mak87}). In the context of abstract data types, Tarlecki \cite{Tar85} characterizes abstract algebraic institutions which admit free constructions. On the other hand, the significant importance of Horn clauses in classical logic was detailed in \cite{Hod93}, while it is well-known that Horn clauses are used both as a specification and as a programming language in Prolog, the most common language in logic programming. \smallskip

In the context of fuzzy logics, several definitions of Horn clause have been proposed in the literature, but there is not a canonical one yet. An extensive and important work in predicate fuzzy logics has been done by B\v{e}lohl\'avek and Vychodil (see \cite{Be02,Be03,BeVy05,BeVic06,BeVic06b,Vy15}). Even if the work of these authors also adopts Pavelka-style, it differs from our approach: we do not restrict Horn clauses to fuzzy equalities and we work in the general semantics of \cite{Ha98}.
%They work with theories that consist of formulas that are implications between identities with premises weighted by truth degrees. They adopt Pavelka style: theories are fuzzy sets of formulas and they consider degrees of provability of formulas from theories. Their basic structure of truth degrees is a complete residuated lattice. The authors derive a Pavelka-style completeness theorem (degree of provability equals degree of truth) from which they get some particular cases by imposing restrictions on the formulas under consideration. As a particular case, they obtain completeness of fuzzy equational logic. In different articles they study the main logical properties of varieties of algebras with fuzzy equalities. 
%Taking a different approach, in a series of papers \cite{Ge01b, Ge01,Ge05}, Gerla proposes to base fuzzy control on fuzzy logic programming, and observes that the class of fuzzy Herbrand interpretations gives a semantics for fuzzy programs. 
Another approach is shown in \cite{DuPra96}, where Dubois and Prade discuss different possibilities of defining \emph{fuzzy rules} and they show how these different semantics can be captured in the framework of fuzzy set theory and possibility theory. We find also that, in the context of fuzzy logic programming, there is a rich battery of proposals of Horn clauses which differ depending on the programming approach chosen. Some reference here are \cite{Voj01, Ebra01}. \smallskip

With the goal of developing a systematic study of the universal Horn fragment of predicate fuzzy logics from a model-theoretic point of view, we took in \cite{CoDe16} the syntactical definition of Horn clause of classical logic. Starting by this general and basic definition we studied the existence of  free models of theories of Horn clauses in MTL$\forall$. As a generalisation of a group-theoretic construction, Mal'tsev showed in classical logic that any theory of Horn clauses has a free model. In the present paper, a definition of Horn clause in RPL$\forall$ using evaluated formulas is introduced. Consequently, we prove the existence of free models of theories of RPL$\forall$- Horn clauses showing in RPL$\forall$ an analogous result to Mal'tsev's one. The advantage of using these RPL$\forall$-Horn clauses instead of the ones of \cite{CoDe16} lies in the fact that the former can be better settled in the context of fuzzy logic programming. For instance, from a syntactical point of view, basic RPL$\forall$-Horn clauses are a particular case of the clauses used in \cite{Cao04}\smallskip

The paper is organized as follows. Section 2 contains the preliminaries on RPL$\forall$. In Section 3 we introduce the definition of a term structure associated to a consistent theory and prove that when this structure is a model of the associated theory, the term structure is free on the class of all models of the theory. In Section 4 we define the notion of RPL$\forall$-Horn clause and it is shown that whenever the associated theory is a set of RPL$\forall$-Horn clauses, the term structure is a model of this theory.

\section{Preliminaries} 

\noindent In this section we introduce the basic notions and results of RPL$\forall$, the first-order extension of Rational Pavelka Logic. For an extensive presentation of RPL$\forall$ see \cite[Ch.3.3 and Ch.5.4]{Ha98} and \cite[Ch.VIII]{CiHaNo11}.

\begin{defi} \textbf{Rational Pavelka Predicate Logic \cite[Ch.VIII]{CiHaNo11}} \label{RPL} Rational Pavelka Predicate Logic RPL$\forall$ is the expansion of \L$\forall$ by adding a truth constant for each rational number $r$ in $[0,1]$ and by adding the axioms RPL1 and RPL2. The following is an axiomatic sytem for RPL$\forall$: \begin{description}

\item[($\mathbf{\text{\L}} 1$)] $\varphi\rightarrow(\psi\rightarrow\varphi)$

\item[($\mathbf{\text{\L}} 2$)] $(\varphi\rightarrow\psi)\rightarrow((\psi\rightarrow\xi)\rightarrow(\varphi\rightarrow\xi))$

\item[($\mathbf{\text{\L}} 3$)] $(\neg\psi\rightarrow\neg\varphi)\rightarrow(\psi\rightarrow\varphi)$

\item[($\mathbf{\text{\L}} 4$)] $((\varphi\rightarrow\psi)\rightarrow\psi)\rightarrow((\psi\rightarrow\varphi)\rightarrow\varphi)$

\item[($\text{RPL}1$)] $(\overline{r}\rightarrow \overline{s})\leftrightarrow\overline{r\rightarrow s}$

\item[($\text{RPL}2$)] $(\overline{r}\&\overline{s})\leftrightarrow\overline{ r\& s}$

\item[($\forall 1$)] $(\forall x)\varphi(x)\rightarrow\varphi(t)$, where the term $t$ is substitutable for $x$ in $\varphi$.

 \item[($\forall 2$)] $(\forall x)(\xi\rightarrow\varphi)\rightarrow(\xi\rightarrow(\forall x)\varphi(x))$, where $x$ is not free in $\xi$.

\end{description}

The rules are Modus Ponens and Generalization, that is, from $\varphi$ infer $(\forall x)\varphi$.  
\end{defi}

\noindent A \emph{theory} $\Phi$ is a set of sentences. We denote by $\Phi\vdash_{\text{RPL}\forall}\varphi$ the fact that $\varphi$ is provable in RPL$\forall$ from the set of formulas $\Phi$. From now on, when it is clear from the context, we will write $\vdash$ to refer to $\vdash_{\text{RPL}\forall}$. We say that a theory $\Phi$ is \emph{consistent} if $\Phi\not\vdash\overline{0}$. 

\begin{defi} 
An \emph{evaluated formula} $(\varphi,r)$ in a language of RPL$\forall$ is a formula of the form $\overline{r}\rightarrow\varphi$, where $r\in[0,1]$ is a rational number and $\varphi$ is a formula without truth constants apart from $\overline{0}$ and $\overline{1}$. We say that an evaluated formula $(\varphi,r)$ is \emph{atomic} whenever $\varphi$ is atomic. 
\end{defi}

Now we introduce the semantics of the predicate languages. Let $[0,1]_{\text{RPL}}$ be the standard RPL-algebra \cite[Def.2.2.5, Ch.II]{CiHaNo11}, a \emph{structure} for a predicate language $\mathcal{P}$ of the logic RPL$\forall$ has the form $\langle[0,1]_{\text{RPL}}, \textbf{M}\rangle$, where $ \textbf{M}=\langle M, (P_M)_{P\in Pred}, (F_M)_{F\in Func} \rangle$, $M$ is a non-empty domain; for each $n$-ary predicate symbol $P\in Pred$, $P_{\mathrm{\mathbf{M}}}$ is an $n$-ary fuzzy relation $M$, i.e., a function $M^n\rightarrow[0,1]_{\text{RPL}}$ (identified with an element of $[0,1]_{\text{RPL}}$ if $n=0$); for each $n$-ary function symbol $F\in Func$, $F_{\mathrm{\mathbf{M}}}$ is a function $M^n\rightarrow M$ (identified with an element of $M$ if $n=0$).  

An \textbf{M}-\emph{evaluation} of the object variables is a mapping $v$ which assigns an element from $M$ to each object variable. Let $v$ be an \textbf{M}-evaluation, $x$ a variable, and $a\in M$. Then by $v[x\mapsto a]$ we denote the \textbf{M}-evaluation such that $v[x\mapsto a](x)=a$ and $v[x\mapsto a](y)=v(y)$ for each object variable $y$ different from $x$. We define the \emph{values} of terms and the \emph{truth values} of formulas in the structure $\langle[0,1]_{\text{RPL}}, \textbf{M}\rangle$\space for an evaluation $v$ recursively as follows: given $F\in Func, P\in Pred$ and $c$ a connective of RPL:
%\begin{small}

%\bigskip

\begin{itemize}

\item $||x||^{\small{[0,1]_{\text{RPL}}}}_{\mathrm{\mathbf{M}},v}=v(x)$
\item $||F(t_1,\ldots,t_n)||^{\small{[0,1]_{\text{RPL}}}}_{\mathrm{\mathbf{M}},v}=F_{\mathrm{\mathbf{M}}}(||t_1||^{\small{[0,1]_{\text{RPL}}}}_{\mathrm{\mathbf{M}},v},\ldots,||t_n||^{\small{[0,1]_{\text{RPL}}}}_{\mathrm{\mathbf{M}},v})$
\item $||P(t_1,\ldots,t_n)||^{\small{[0,1]_{\text{RPL}}}}_{\mathrm{\mathbf{M}},v}=P_{\mathrm{\mathbf{M}}}(||t_1||^{\small{[0,1]_{\text{RPL}}}}_{\mathrm{\mathbf{M}},v},\ldots,||t_n||^{\small{[0,1]_{\text{RPL}}}}_{\mathrm{\mathbf{M}},v})$
\item $||c(\varphi_1,\ldots,\varphi_n)||^{\small{[0,1]_{\text{RPL}}}}_{\mathrm{\mathbf{M}},v}=c_{[0,1]_{\text{RPL}}}(||\varphi_1||^{\small{[0,1]_{\text{RPL}}}}_{\mathrm{\mathbf{M}},v},\ldots,||\varphi_n||^{\small{[0,1]_{\text{RPL}}}}_{\mathrm{\mathbf{M}},v})$
\item $||(\forall x)\varphi||^{\small{[0,1]_{\text{RPL}}}}_{\mathrm{\mathbf{M}},v}=inf\{||\varphi||^{\small{[0,1]_{\text{RPL}}}}_{\mathrm{\mathbf{M}},v[x\rightarrow a]}\mid a\in M\}$
\item $||(\exists x)\varphi||^{\small{[0,1]_{\text{RPL}}}}_{\mathrm{\mathbf{M}},v}=sup\{||\varphi||^{\small{[0,1]_{\text{RPL}}}}_{\mathrm{\mathbf{M}},v[x\rightarrow a]}\mid a\in M\}.$
\end{itemize}

Observe that, since the universe of the standard RPL-algebra is the interval of real numbers $[0,1]$, which is complete, all the infima and suprema in the definition of the semantics of the quantifiers exist. 
 
\bigskip
\noindent For every formula  $\varphi$, possibly with variables, we write $||\varphi||^{[0,1]_{\text{RPL}}}_{\textbf{M}}=$ $$inf\{||\varphi||^{[0,1]_{\text{RPL}}}_{\textbf{M},v}\mid \text{ for every } \textbf{M}\text{ -evaluation } v\},$$ \noindent we say that $\langle[0,1]_{\text{RPL}}, \textbf{M}\rangle$ is a \emph{model of a sentence} $\varphi$ if $||\varphi||^{[0,1]_{\text{RPL}}}_{\textbf{M}}=1$; and that $\langle[0,1]_{\text{RPL}}, \textbf{M}\rangle$ is a \emph{model of a theory} $\Phi$ if  $||\varphi||^{[0,1]_{\text{RPL}}}_{\textbf{M}}=1$ for every $\varphi\in\Phi$. \bigskip

In particular, given a structure $\langle[0,1]_{\text{RPL}}, \textbf{M}\rangle$ and two formulas $\varphi$ and $\psi$:

$$||\varphi\&\psi||^{[0,1]_{\text{RPL}}}_{\textbf{M}}=max\{||\varphi||^{[0,1]_{\text{RPL}}}_{\textbf{M}}+||\psi||^{[0,1]_{\text{RPL}}}_{\textbf{M}}-1,0\}$$
$$||\varphi\rightarrow\psi||^{[0,1]_{\text{RPL}}}_{\textbf{M}}=min\{1-||\varphi||^{[0,1]_{\text{RPL}}}_{\textbf{M}}+||\psi||^{[0,1]_{\text{RPL}}}_{\textbf{M}},1\}.$$

\begin{defi}  %\textbf{\cite[Definition 6]{DeGaNo14}}
\label{def:mapping structures} Let $\langle[0,1]_{\text{RPL}},\mathrm{\mathbf{M}}\rangle$ and $\langle[0,1]_{\text{RPL}},\mathrm{\mathbf{N}}\rangle$ be structures, and $g$ be a mapping from $M$ to $N$. We say that $g$ is a \emph{homomorphism from} $\langle[0,1]_{\text{RPL}},\mathrm{\mathbf{M}}\rangle$ \emph{to} $\langle[0,1]_{\text{RPL}},\mathrm{\mathbf{N}}\rangle$ if for every $n$-ary function symbol $F$, any $n$-ary predicate symbol $P$ and $d_1,\ldots,d_n\in M$, \begin{itemize}
\item[(1)] $\space g(F_{\mathrm{\mathbf{M}}}(d_1,\ldots,d_n))=F_{\mathrm{\mathbf{N}}}(g(d_1),\ldots,g(d_n))$, and\newline

\item[(2)] $ P_{\mathrm{\mathbf{M}}}(d_1,\ldots,d_n)=1\Rightarrow P_{\mathrm{\mathbf{N}}}(g(d_1),\ldots,g(d_n))=1.$
 \end{itemize}

\end{defi}

Throughout the paper we assume that all our languages have a binary predicate symbol $\approx$ and we extend the axiomatic system of RPL$\forall$ in \cite[Ch.VIII]{CiHaNo11} with the following axioms of similarity and congruence.
\smallskip

\begin{defi} $\textbf{ \cite[Definitions 5.6.1, 5.6.5]{Ha98}}$ \label{def similarity} 

\begin{itemize}

\item[S1.] $(\forall x)x\approx x$ 

\item[S2.] $(\forall x)(\forall y)(x\approx y\rightarrow y\approx x$) 

\item[S3.] $(\forall x)(\forall y)(\forall z)(x\approx y \& y\approx z\rightarrow x\approx z)$  \end{itemize} 
\begin{itemize}
\item[C1.]  For each $n$-ary function symbol  $F$, \end{itemize} 
$(\forall x_1)\dotsb(\forall x_n)(\forall y_1)\dotsb(\forall y_n)(x_1\approx y_1\&\dotsb \& x_n\approx y_n\rightarrow F(x_1,\ldots,x_n)\approx F(y_1,\ldots,y_n))$

\begin{itemize}
\item[C2.]  For each $n$-ary predicate symbol  $P$, \end{itemize} 
$(\forall x_1)\dotsb(\forall x_n)(\forall y_1)\dotsb(\forall y_n)(x_1\approx y_1\&\dotsb \& x_n\approx y_n\rightarrow (P(x_1, \ldots, x_n)\leftrightarrow P(y_1,\ldots, y_n))$ 
\end{defi}

\begin{defi} \label{provability}
Let $\Phi$ be a theory over RPL$\forall$, $\varphi$ a formula in a language of RPL$\forall$ and $r\in[0,1]$ a rational number. \newline \newline
(i) The \emph{truth degree} of $\varphi$ over $\Phi$ is $||\varphi||_{\Phi}=$
$$inf\{ ||\varphi||^{[0,1]_{\emph{RPL}}}_{\emph{\textbf{M}}}\mid \langle[0,1]_{\emph{RPL}},\emph{\textbf{M}}\rangle \text{ is a model of } \Phi\}.$$
(ii) The \emph{provability degree} of $\varphi$ over $\Phi$ is$$|\varphi|_{\Phi}=sup\{r\mid \Phi\vdash\overline{r}\rightarrow\varphi\}.$$
\end{defi}

\begin{theorem} \textbf{Pavelka-style completeness \cite[Th.5.4.10]{Ha98}} \label{pavelka}
Let $\Phi$ be a theory over RPL$\forall$ and $\varphi$ a formula in a language of RPL$\forall$. Then, $|\varphi|_{\Phi}=||\varphi||_{\Phi}$.
\end{theorem}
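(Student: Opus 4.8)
\textit{Proof plan.}

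The plan is to establish the two inequalities $|\varphi|_\Phi \le \|\varphi\|_\Phi$ and $\|\varphi\|_\Phi \le |\varphi|_\Phi$ separately. The first, the soundness direction, is routine. Suppose $\Phi\vdash \overline r\rightarrow\varphi$. By soundness of the axiomatic system of RPL$\forall$, every model $\langle[0,1]_{\text{RPL}},\mathbf M\rangle$ of $\Phi$ satisfies $\|\overline r\rightarrow\varphi\|_{\mathbf M}=1$. Since $\overline r$ is interpreted as $r$ and $\rightarrow$ as the \L ukasiewicz implication, $\|\overline r\rightarrow\varphi\|_{\mathbf M}=\min\{1-r+\|\varphi\|_{\mathbf M},1\}$, and this equals $1$ exactly when $\|\varphi\|_{\mathbf M}\ge r$. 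Taking the infimum over all models of $\Phi$ gives $\|\varphi\|_\Phi\ge r$, and then the supremum over all such $r$ yields $\|\varphi\|_\Phi\ge |\varphi|_\Phi$.

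For the reverse inequality I would construct a single canonical (term) model $\mathbf M^\ast$ of $\Phi$ in which $\|\chi\|_{\mathbf M^\ast}=|\chi|_\Phi$ for every sentence $\chi$; this gives at once $\|\varphi\|_\Phi\le\|\varphi\|_{\mathbf M^\ast}=|\varphi|_\Phi$. The first step is a Henkin-style expansion: enlarge the language by countably many fresh constants and extend $\Phi$ to a theory $\Phi^\ast$ having witnesses for the existential quantifiers, so that the provability degree of each $(\exists x)\psi$ is approximated by the degrees of the instances $\psi(c)$ over witnessing constants $c$. The crucial requirement, which is stronger than in classical Henkin constructions, is that this expansion be conservative on provability degrees, i.e. $|\chi|_{\Phi^\ast}=|\chi|_\Phi$ for every sentence $\chi$ of the original language. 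I would obtain this by iterating the addition of witnesses and controlling the degrees through the deduction apparatus of RPL$\forall$ together with the bookkeeping axioms RPL1 and RPL2, which permit the syntactic manipulation of the rational constants.

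I would then take as domain of $\mathbf M^\ast$ the set of closed terms of the expanded language, quotiented by the congruence induced by $\approx$ (axioms S1--S3, C1, C2 guarantee this is indeed a congruence), interpret each function symbol by term formation, and set the truth value of every atomic sentence $P(t_1,\ldots,t_n)$ equal to its provability degree $|P(t_1,\ldots,t_n)|_{\Phi^\ast}$. The core of the argument is the \emph{truth lemma}: $\|\chi\|_{\mathbf M^\ast}=|\chi|_{\Phi^\ast}$ for every sentence $\chi$, proved by induction on the structure of $\chi$. The propositional connective cases rest on the fact that $|\cdot|_{\Phi^\ast}$ commutes with the \L ukasiewicz operations, which is exactly where the truth constants and RPL1, RPL2 enter, so that the provability degree behaves like an MV-algebra homomorphism into $[0,1]$. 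Since $\Phi\subseteq\Phi^\ast$ and every sentence of $\Phi$ has provability degree $1$, the structure $\mathbf M^\ast$ is a model of $\Phi$; together with conservativity $|\varphi|_{\Phi^\ast}=|\varphi|_\Phi$, this closes the hard direction, and combining both inequalities gives the claimed equality.

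The main obstacle is the quantifier step of the truth lemma, say for $(\forall x)\psi$. Axiom $\forall 1$ gives $|(\forall x)\psi|_{\Phi^\ast}\le|\psi(t)|_{\Phi^\ast}$ for every closed term $t$, hence $|(\forall x)\psi|_{\Phi^\ast}\le\inf_t|\psi(t)|_{\Phi^\ast}$, and by the induction hypothesis this infimum is precisely the value $\|(\forall x)\psi\|_{\mathbf M^\ast}$ computed over the term domain. The genuinely difficult inequality is the reverse, $\inf_t|\psi(t)|_{\Phi^\ast}\le|(\forall x)\psi|_{\Phi^\ast}$: because provability degrees are suprema that need not be attained, one cannot simply select a witnessing term, and one must combine the Henkin witnesses with a limiting argument exploiting the density of the rationals in $[0,1]$ and the bookkeeping constants to transfer arbitrarily good approximations of the individual $|\psi(t)|_{\Phi^\ast}$ into an approximation of $|(\forall x)\psi|_{\Phi^\ast}$. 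This is exactly the point at which Pavelka-style completeness departs from the classical Henkin method, and where the continuity of the \L ukasiewicz connectives is indispensable.

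\hfill$\Box$
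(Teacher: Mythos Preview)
The paper does not prove this theorem at all; it is simply quoted from H\'ajek \cite[Th.~5.4.10]{Ha98} and then used as a black box in Sections~3 and~4. So there is no ``paper's own proof'' to compare against, and your write-up is really a sketch of the standard proof from the literature.

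That said, your sketch has a genuine gap in the completeness half. The soundness direction is fine. In the canonical-model construction, however, you assert that after the Henkin expansion the map $\chi\mapsto|\chi|_{\Phi^\ast}$ ``commutes with the \L ukasiewicz operations'' and so behaves like an MV-homomorphism into $[0,1]$. This is false for a theory that is merely Henkin. For a trivial instance, let $\Phi^\ast$ prove nothing about two propositional atoms $p,q$; then $|p|_{\Phi^\ast}=|q|_{\Phi^\ast}=0$, hence $|p|_{\Phi^\ast}\Rightarrow_{\text{\L}}|q|_{\Phi^\ast}=1$, whereas $|p\to q|_{\Phi^\ast}=0$. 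Similarly $|\neg p|_{\Phi^\ast}=0\neq 1-|p|_{\Phi^\ast}$. The bookkeeping axioms RPL1, RPL2 let you manipulate rational constants, but they do \emph{not} by themselves force the provability degree to respect the connectives.

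What is missing is the Lindenbaum step: before (or interleaved with) adding Henkin witnesses one must extend $\Phi$ to a \emph{complete} (equivalently, linear/prime) theory, the fuzzy analogue of a maximal consistent set. It is only for such a theory that $|\cdot|$ becomes an MV-homomorphism and the truth lemma can be pushed through the propositional connectives. In H\'ajek's argument this extension is carried out explicitly and is where the real work lies; the quantifier step you single out as ``the main obstacle'' is then handled by the Henkin witnesses in the usual way. So the difficulty is not located where you place it, and the propositional inductive step of your truth lemma, as written, fails.
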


\section{Term structures} 

\noindent In this section we introduce the notion of term structure associated to a consistent theory $\Phi$ over RPL$\forall$, and prove that whenever the term structure is a model of $\Phi$, the structure is free on the class of models of $\Phi$. Term structures have been used extensively in classical logic, for instance, to prove the satisfiability of a set of consistent sentences (see for example \cite[Ch.V]{EbiFlu94}).

\begin{defi}\label{relacio}   
Let $\Phi$ be a consistent theory, we define a binary relation on the set of terms, denoted by $\sim$, in the following way: For every terms $t_1,t_2$,

\begin{center}
$t_1\sim t_2$ if and only if $|t_1\approx t_2|_{\Phi}=1$. 
\end{center}
\end{defi}

Using Axioms $\forall1$, S1, S2 and S3, it can be proven that $\sim$ is an equivalence relation. Next lemma, which states that the equivalence relation $\sim$ is compatible with the symbols of the language, is proved using Axioms $\forall1$, C1, C2 and  \cite[Remark 3.18]{Ha98}.

\begin{lemma} \label{compatible}
For any consistent theory $\Phi$, the following holds: If $t_i\sim t'_i$ for every $1\leq i\leq n$, then \begin{itemize}

\item[(i)] For any $n$-ary function symbol $F$, $F(t_1,\ldots,t_n)\sim F(t'_1,\ldots,t'_n)$. 

\item[(ii)] For any $n$-ary predicate symbol $P$ and rational number $r\in[0,1]$, \begin{center}

$|(\overline{r}\rightarrow P(t_1,...,t_n))\leftrightarrow (\overline{r}\rightarrow P(t'_1,...,t'_n))|_{\Phi}=1$ 

\end{center}

\end{itemize}

\end{lemma}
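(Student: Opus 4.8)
The plan is to prove both parts by the same two-step recipe: first instantiate the relevant congruence axiom (C1 for (i), C2 for (ii)) at the given terms by repeated use of $\forall 1$, and then propagate the provability degrees of the hypotheses $|t_i \approx t'_i|_\Phi = 1$ through the resulting implication. Throughout I use that any theorem of RPL$\forall$, in particular every instance of an axiom, has provability degree $1$ over a consistent $\Phi$, since $\vdash\chi$ gives $\Phi\vdash\overline{1}\rightarrow\chi$ and hence $|\chi|_\Phi=1$.

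For (i), I would apply $\forall 1$ repeatedly to C1, substituting each $x_i$ by $t_i$ and each $y_i$ by $t'_i$ (these substitutions are admissible, as the variables are universally quantified), obtaining that
$$t_1 \approx t'_1 \,\&\, \dotsb \,\&\, t_n \approx t'_n \rightarrow F(t_1,\ldots,t_n) \approx F(t'_1,\ldots,t'_n)$$
has provability degree $1$. By hypothesis each conjunct $t_i\approx t'_i$ has provability degree $1$, so by \cite[Remark 3.18]{Ha98} the strong conjunction $t_1 \approx t'_1 \,\&\, \dotsb \,\&\, t_n \approx t'_n$ also has provability degree $1$. Combining these two facts through the detachment behaviour of $\rightarrow$ with respect to provability degrees (again \cite[Remark 3.18]{Ha98}) yields $|F(t_1,\ldots,t_n) \approx F(t'_1,\ldots,t'_n)|_\Phi = 1$, which is exactly $F(t_1,\ldots,t_n) \sim F(t'_1,\ldots,t'_n)$.

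For (ii), the same instantiation applied to C2 shows that
$$t_1 \approx t'_1 \,\&\, \dotsb \,\&\, t_n \approx t'_n \rightarrow (P(t_1,\ldots,t_n) \leftrightarrow P(t'_1,\ldots,t'_n))$$
has provability degree $1$, and as before this forces $|P(t_1,\ldots,t_n) \leftrightarrow P(t'_1,\ldots,t'_n)|_\Phi = 1$. It then remains to pass from the biconditional of the atoms to the biconditional of the evaluated formulas $\overline{r}\rightarrow P(\ldots)$. This follows from the congruence of \L ukasiewicz implication in its consequent: the formula $(\chi \rightarrow (\alpha \leftrightarrow \beta)) \rightarrow ((\chi \rightarrow \alpha) \leftrightarrow (\chi \rightarrow \beta))$ is provable, so taking $\chi=\overline{r}$, $\alpha=P(t_1,\ldots,t_n)$ and $\beta=P(t'_1,\ldots,t'_n)$ and propagating degree $1$ once more gives the stated equality.

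The main obstacle is the degree bookkeeping rather than any structural difficulty: one must verify that provability degree $1$ is genuinely preserved under strong conjunction and under detachment against a theorem, which is precisely the content invoked from \cite[Remark 3.18]{Ha98}. An alternative that avoids explicit manipulation of provability degrees is to argue semantically: by Pavelka-style completeness (Theorem \ref{pavelka}) it suffices to check that the corresponding truth degrees equal $1$, and in every model of $\Phi$ the congruence axioms evaluate to $1$ while each antecedent satisfies $\|t_i \approx t'_i\|_{\mathbf{M}} = 1$, forcing the consequents to evaluate to $1$ in every model and hence the truth degree over $\Phi$ to be $1$.
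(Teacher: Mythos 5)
Your proof follows exactly the route the paper indicates (the paper gives no detailed proof, only the remark that the lemma ``is proved using Axioms $\forall1$, C1, C2 and [H\'ajek, Remark 3.18]''): instantiate the congruence axioms via $\forall 1$ and propagate provability degree $1$ through $\&$ and modus ponens using the arithmetic of provability degrees. Part (i) and the first half of part (ii) are fine, including the alternative semantic argument via Pavelka completeness.

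One concrete slip in part (ii): the auxiliary formula $(\chi \rightarrow (\alpha \leftrightarrow \beta)) \rightarrow ((\chi \rightarrow \alpha) \leftrightarrow (\chi \rightarrow \beta))$ is \emph{not} provable in \L ukasiewicz logic. Taking truth values $\chi=0.5$, $\alpha=0.6$, $\beta=0.1$ gives antecedent $1$ and consequent $0.6$, so the formula evaluates to $0.6$. What you actually need is the standard congruence law $(\alpha \leftrightarrow \beta) \rightarrow ((\chi \rightarrow \alpha) \leftrightarrow (\chi \rightarrow \beta))$ (which is a BL-theorem, hence provable here); since you have already established $|P(t_1,\ldots,t_n) \leftrightarrow P(t'_1,\ldots,t'_n)|_{\Phi}=1$, one application of detachment against this theorem with $\chi=\overline{r}$ yields the stated conclusion. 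With that substitution the argument is complete and matches the paper's intended proof.
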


From now on, for any term $t$ we denote by $\overline{t}$ the $\sim$-class of $t$. 

\begin{defi}\textbf{Term Structure} \label{term structure}
Let $\Phi$ be a consistent theory. We define the following structure $\langle[0,1]_{\text{RPL}},\mathrm{\mathbf{T}}^{\Phi}\rangle$, where $T^{\Phi}$ is the set of all equivalence classes of the relation $\sim$ and 
\begin{itemize}

\item For any $n$-ary function symbol $F$ and terms $t_1,\ldots,t_n$, 
$$F_{\mathrm{\mathbf{T}}^{\Phi}}(\overline{t_1},\ldots,\overline{t_n})=\overline{F(t_1,\ldots,t_n)}$$

\item For any $n$-ary predicate symbol $P$ and terms $t_1,\ldots,t_n$, 
 $$ P_{\mathrm{\mathbf{T}}^{\Phi}}(\overline{t_1},\ldots,\overline{t_n})=|P(t_1,\ldots,t_n)|_{\Phi} $$

\end{itemize}
We call $\langle[0,1]_{\text{RPL}},\mathrm{\mathbf{T}}^{\Phi}\rangle$ the \emph{term structure associated to $\Phi$}.

\end{defi}

Notice that for $0$-ary functions, that is, for individual constants, $c_{\mathrm{\mathbf{T}}^{\Phi}}=\overline{c}$. Given a consistent theory $\Phi$, let $e^{\Phi}$ be the following $\mathrm{\mathbf{T}}^{\Phi}$-evaluation: $e^{\Phi}(x)=\overline{x}$ for every variable $x$. We call $e^{\Phi}$ the \emph{canonical evaluation of} $\langle[0,1]_{\text{RPL}},\mathrm{\mathbf{T}}^{\Phi}\rangle$.

\begin{lemma} \label{terms} 
Let $\Phi$ be a consistent theory, the following holds: \begin{itemize}

\item[(i)] For any term $t$, $||t||^{[0,1]_{\emph{RPL}}}_{\mathrm{\mathbf{T}}^{\Phi},e^{\Phi}}=\overline{t}$.

\item[(ii)] For any atomic formula $\varphi$, $||\varphi||^{[0,1]_{\emph{RPL}}}_{\mathrm{\mathbf{T}}^{\Phi},e^{\Phi}}=1$ if and only if $|\varphi|_{\Phi}=1$. 

\item[(iii)] For any evaluated atomic formula $(\varphi,s)$, $||(\varphi,s)||^{[0,1]_{\emph{RPL}}}_{\mathrm{\mathbf{T}}^{\Phi},e^{\Phi}}=1$ if and only if $|(\varphi,s)|_{\Phi}=1$.

\end{itemize}  
\end{lemma}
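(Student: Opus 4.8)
The plan is to prove the three items in order, since each builds on the previous one. For item (i), I would proceed by induction on the structure of the term $t$. The base case splits into variables and individual constants: for a variable $x$, the definition of the canonical evaluation gives $||x||^{[0,1]_{\text{RPL}}}_{\mathrm{\mathbf{T}}^{\Phi},e^{\Phi}}=e^{\Phi}(x)=\overline{x}$ directly; for a constant $c$, the semantic clause for terms together with the remark that $c_{\mathrm{\mathbf{T}}^{\Phi}}=\overline{c}$ gives the result. For the inductive step, suppose $t=F(t_1,\ldots,t_n)$ and that the claim holds for each $t_i$. Then by the semantic clause for function symbols and the induction hypothesis, $||F(t_1,\ldots,t_n)||^{[0,1]_{\text{RPL}}}_{\mathrm{\mathbf{T}}^{\Phi},e^{\Phi}}=F_{\mathrm{\mathbf{T}}^{\Phi}}(\overline{t_1},\ldots,\overline{t_n})$, which equals $\overline{F(t_1,\ldots,t_n)}$ by the definition of $F_{\mathrm{\mathbf{T}}^{\Phi}}$ in the term structure. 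This closes the induction.

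For item (ii), let $\varphi=P(t_1,\ldots,t_n)$ be atomic. Using the semantic clause for predicate symbols and then item (i) applied to each argument term, I compute
\begin{equation*}
||\varphi||^{[0,1]_{\text{RPL}}}_{\mathrm{\mathbf{T}}^{\Phi},e^{\Phi}}=P_{\mathrm{\mathbf{T}}^{\Phi}}(||t_1||^{[0,1]_{\text{RPL}}}_{\mathrm{\mathbf{T}}^{\Phi},e^{\Phi}},\ldots,||t_n||^{[0,1]_{\text{RPL}}}_{\mathrm{\mathbf{T}}^{\Phi},e^{\Phi}})=P_{\mathrm{\mathbf{T}}^{\Phi}}(\overline{t_1},\ldots,\overline{t_n})=|P(t_1,\ldots,t_n)|_{\Phi},
\end{equation*}
where the last equality is exactly the definition of $P_{\mathrm{\mathbf{T}}^{\Phi}}$. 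Thus the truth value of $\varphi$ under the canonical evaluation coincides with the provability degree $|\varphi|_{\Phi}$, and in particular one equals $1$ if and only if the other does. Here I must be slightly careful that $P_{\mathrm{\mathbf{T}}^{\Phi}}$ is well-defined on equivalence classes, but this is guaranteed by Lemma~\ref{compatible}(ii), so the computation is legitimate.

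For item (iii), recall that an evaluated atomic formula $(\varphi,s)$ is by definition the formula $\overline{s}\rightarrow\varphi$ with $\varphi$ atomic. The value of the truth constant $\overline{s}$ is $s$ in every structure, so by the semantic clause for the implication connective, $||(\varphi,s)||^{[0,1]_{\text{RPL}}}_{\mathrm{\mathbf{T}}^{\Phi},e^{\Phi}}=\min\{1-s+||\varphi||^{[0,1]_{\text{RPL}}}_{\mathrm{\mathbf{T}}^{\Phi},e^{\Phi}},1\}$, which equals $1$ precisely when $||\varphi||^{[0,1]_{\text{RPL}}}_{\mathrm{\mathbf{T}}^{\Phi},e^{\Phi}}\geq s$. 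On the syntactic side, $|(\varphi,s)|_{\Phi}=1$ should likewise reduce to the condition that the provability degree of $\varphi$ is at least $s$. The main obstacle, and the step deserving the most care, is matching these two ``$\geq s$'' conditions: I expect to invoke Pavelka-style completeness (Theorem~\ref{pavelka}), which identifies $|\varphi|_{\Phi}$ with $||\varphi||_{\Phi}$, together with item (ii) and the definition of the provability degree as a supremum, to show that $||(\varphi,s)||^{[0,1]_{\text{RPL}}}_{\mathrm{\mathbf{T}}^{\Phi},e^{\Phi}}=1$ and $|(\varphi,s)|_{\Phi}=1$ are equivalent. Reconciling the supremum defining $|\,\cdot\,|_{\Phi}$ with the single evaluation at $e^{\Phi}$, and handling the rationality of $s$ against a possibly irrational truth degree, is where the argument is most delicate.
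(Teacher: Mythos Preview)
Your arguments for (i) and (ii) are correct and match what the paper calls ``straightforward.'' In fact, your computation in (ii) establishes the stronger fact $||\varphi||^{[0,1]_{\text{RPL}}}_{\mathrm{\mathbf{T}}^{\Phi},e^{\Phi}}=|\varphi|_{\Phi}$ outright, not merely the biconditional with $1$; this is the key to a cleaner proof of (iii) than the one you sketch.

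For (iii), your route via Pavelka-style completeness works, but it is a detour and the worries you raise (irrational truth degrees, reconciling the supremum with the single evaluation) are misplaced. The paper's argument is purely syntactic and avoids completeness altogether: from the equality just mentioned one has
\[
||(\varphi,s)||^{[0,1]_{\text{RPL}}}_{\mathrm{\mathbf{T}}^{\Phi},e^{\Phi}}=1 \ \Leftrightarrow\ s\le ||\varphi||^{[0,1]_{\text{RPL}}}_{\mathrm{\mathbf{T}}^{\Phi},e^{\Phi}}=|\varphi|_{\Phi}\ \Leftrightarrow\ |\overline{s}\rightarrow\varphi|_{\Phi}=1,
\]
where the last equivalence is a standard property of provability degrees (\cite[Remark 3.3.18]{Ha98}): $s\le|\varphi|_{\Phi}$ iff $|\overline{s}\rightarrow\varphi|_{\Phi}=1$. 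So what you identify as ``the most delicate step'' is in fact already handled by your own proof of (ii) together with this elementary syntactic fact, with no appeal to Theorem~\ref{pavelka} needed.
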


\begin{proof}
The proofs of (i) and (ii) are straightforward. Regarding (iii), let $(\varphi,s)=(P(t_1\ldots,t_n),s)$, we have: 

\medskip$\begin{array}{lr} ||(P(t_1\ldots,t_n),s)||^{[0,1]_{\text{RPL}}}_{\mathrm{\mathbf{T}}^{\Phi},e^{\Phi}}=1 & \text{iff} 
\\[1ex]  s\leq||P(t_1\ldots,t_n)||^{[0,1]_{\text{RPL}}}_{\mathrm{\mathbf{T}}^{\Phi},e^{\Phi}} & \text{iff}
\\[1ex]  s\leq P_{\textbf{T}^{\Phi}}(\overline{t_1}\ldots,\overline{t_n})& \text{iff}
\\[1ex]  s\leq |P(t_1\ldots,t_n)|_{\Phi}  \text{ iff } |\overline{s}\rightarrow P(t_1,\ldots, t_n)|_{\Phi}=1. & \end{array}$

\smallskip

\noindent The last equivalence is proved from \cite[Remark 3.18]{Ha98}.  \end{proof}

\bigskip

Since the simplest well-formed formulas are atomic formulas, Lemma \ref{terms} (ii) can be read as saying that term structures are minimal with respect to atomic formulas. By Theorem \ref{pavelka}, $|\varphi|_{\Phi}=||\varphi||_{\Phi}$ and, by Lemma \ref{terms} (ii), the term structure $\langle[0,1]_{\text{RPL}},\mathrm{\mathbf{T}}^{\Phi}\rangle$ only assigns the truth value $1$ to those atomic formulas that have $1$ as their truth value in every model $\langle[0,1]_{\text{RPL}},\textbf{M}\rangle$ of $\Phi$. By a similar argument, Lemma \ref{terms} (iii) states that the term structure $\langle[0,1]_{\text{RPL}},\mathrm{\mathbf{T}}^{\Phi}\rangle$ is minimal with respect to evaluated atomic formulas.

From an algebraic point of view, the minimality of the term structure is revealed by the fact that the structure is \emph{free}. The following theorem proves that in case that the term structure associated to a theory is a model of that theory, the term structure is free.

Working in predicate fuzzy logics (and, in particular, in RPL$\forall$) allows to define the term structure associated to a theory using similarities instead of crisp identities. This leads us to a notion of free structure restricted to the class of reduced models of that theory. Remember that \emph{reduced structures} are those whose Leibniz congruence is the identity. By \cite[Lemma 20]{De12}, a structure $\langle[0,1]_{\text{RPL}},\mathrm{\mathbf{M}}\rangle$ is reduced iff it has the \emph{equality property} (EQP) (that is, for any $d,e\in M$, $ || d\approx e||^{[0,1]_{\text{RPL}}}_{\mathrm{\mathbf{M}}}=1$ iff $d=e$). Observe that, by using Definitions \ref{relacio} and \ref{term structure} and the fact that $\sim$ is an equivalence relation, it can be proven that $\langle[0,1]_{\text{RPL}},\mathrm{\mathbf{T}}^{\Phi}\rangle$ is a reduced structure.

\begin{theorem} \label{free thm}
Let $\Phi$ be a consistent theory such that $\langle[0,1]_{\emph{RPL}},\mathrm{\mathbf{T}}^{\Phi}\rangle$ is a model of $\Phi$. Then $\langle[0,1]_{\emph{RPL}},\mathrm{\mathbf{T}}^{\Phi}\rangle$ is free on the class of all the reduced models $\langle[0,1]_{\emph{RPL}},\mathrm{\mathbf{N}}\rangle$ of $\Phi$. That is, for every reduced model of $\Phi$ $\langle[0,1]_{\emph{RPL}},\mathrm{\mathbf{N}}\rangle$ and every $\mathrm{\mathbf{N}}$-evaluation $v$, there is a unique homomorphism $g$ from $\langle[0,1]_{\emph{RPL}},\mathrm{\mathbf{T}}^{\Phi}\rangle$ to $\langle[0,1]_{\emph{RPL}},\mathrm{\mathbf{N}}\rangle$ such that for every variable $x$, $g(\overline{x})=v(x)$. 
 \end{theorem}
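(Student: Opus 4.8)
The plan is to prove existence and uniqueness of the homomorphism separately, with the obvious candidate being the map induced by term evaluation in $\mathbf{N}$. Before that, I would observe that the hypothesis that $\langle[0,1]_{\text{RPL}},\mathbf{T}^\Phi\rangle$ is a model of $\Phi$, combined with the fact (noted just before the statement) that $\mathbf{T}^\Phi$ is reduced, guarantees that $\mathbf{T}^\Phi$ itself belongs to the class of reduced models of $\Phi$; this is what makes the freeness claim meaningful, although the hypothesis will not otherwise be used in constructing $g$.

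For existence, I would set $g(\overline{t}) := \|t\|^{[0,1]_{\text{RPL}}}_{\mathbf{N},v}$ for every term $t$. The main obstacle is to check that this is well defined, i.e.\ independent of the chosen representative: I must show that $t_1\sim t_2$ implies $\|t_1\|_{\mathbf{N},v}=\|t_2\|_{\mathbf{N},v}$. This is the one place where both Pavelka-style completeness and the reducedness of $\mathbf{N}$ are essential. From $t_1\sim t_2$, that is $|t_1\approx t_2|_\Phi=1$, Theorem \ref{pavelka} gives $\|t_1\approx t_2\|_\Phi=1$; since this truth degree is the infimum over all models of $\Phi$ of values lying in $[0,1]$, every such model — in particular $\mathbf{N}$ — must satisfy $\|t_1\approx t_2\|_{\mathbf{N}}=1$, whence $\|t_1\approx t_2\|_{\mathbf{N},v}=1$. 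Writing $d=\|t_1\|_{\mathbf{N},v}$ and $e=\|t_2\|_{\mathbf{N},v}$, the semantic clause for $\approx$ turns this into $\approx_{\mathbf{N}}(d,e)=1$, and the equality property of the reduced model $\mathbf{N}$ forces $d=e$.

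Granting well-definedness, the homomorphism conditions of Definition \ref{def:mapping structures} are routine. Condition~(1) follows by unwinding $g(F_{\mathbf{T}^\Phi}(\overline{t_1},\ldots,\overline{t_n}))=g(\overline{F(t_1,\ldots,t_n)})=\|F(t_1,\ldots,t_n)\|_{\mathbf{N},v}$ and comparing with the recursive clause for the value of a term, which yields precisely $F_{\mathbf{N}}(g(\overline{t_1}),\ldots,g(\overline{t_n}))$. For condition~(2), if $P_{\mathbf{T}^\Phi}(\overline{t_1},\ldots,\overline{t_n})=|P(t_1,\ldots,t_n)|_\Phi=1$, then Theorem \ref{pavelka} and the same infimum argument give $\|P(t_1,\ldots,t_n)\|_{\mathbf{N},v}=1$, i.e.\ $P_{\mathbf{N}}(g(\overline{t_1}),\ldots,g(\overline{t_n}))=1$. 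Finally $g(\overline{x})=\|x\|_{\mathbf{N},v}=v(x)$, as required.

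For uniqueness, I would let $h$ be any homomorphism with $h(\overline{x})=v(x)$ for all variables and prove $h(\overline{t})=\|t\|_{\mathbf{N},v}$ by induction on the term $t$. The variable case is the defining condition on $h$; the constant case is condition~(1) applied to $0$-ary function symbols (giving $h(\overline{c})=c_{\mathbf{N}}$); and for $t=F(t_1,\ldots,t_n)$ condition~(1) together with the induction hypothesis gives $h(\overline{t})=F_{\mathbf{N}}(h(\overline{t_1}),\ldots,h(\overline{t_n}))=F_{\mathbf{N}}(\|t_1\|_{\mathbf{N},v},\ldots,\|t_n\|_{\mathbf{N},v})=\|t\|_{\mathbf{N},v}$. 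Since every element of $T^\Phi$ is of the form $\overline{t}$, this yields $h=g$ and completes the proof.
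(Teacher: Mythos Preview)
Your proposal is correct and follows essentially the same approach as the paper: define $g(\overline{t})=\|t\|_{\mathbf{N},v}$, use Pavelka-style completeness together with the equality property of the reduced model $\mathbf{N}$ for well-definedness, and verify the two homomorphism conditions via Theorem~\ref{pavelka} again. The only cosmetic difference is that you spell out uniqueness by an explicit induction on terms, whereas the paper simply observes that $\{\overline{x}\mid x\text{ a variable}\}$ generates $T^{\Phi}$; these amount to the same argument.
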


\begin{proof} Let $\langle[0,1]_{\text{RPL}},\mathrm{\mathbf{N}}\rangle$ be a reduced model of $\Phi$ and $v$ an $\textbf{N}$-evaluation. %Let $f= id_{[0,1]_{\text{RPL}}}$, it is clear that $id_{[0,1]_{\text{RPL}}}$ is the unique homomorphism from $[0,1]_{\text{RPL}}$ to $[0,1]_{\text{RPL}}$. 
We define $g$ by: $g(\overline{t})=|| t ||^{[0,1]_{\text{RPL}}}_{\mathrm{\mathbf{N}},v}$ for every term $t$. We show that $g$ is the claimed homomorphism. \newline %(for the definition of homomorphism see the Preliminaries section, Definition \ref{def:mapping structures}). \newline

Let us first check that $g$ is well-defined. Let $t_1,t_2$ be terms with $\overline{t_1}=\overline{t_2}$, i.e., $t_1\sim t_2$, that is, $|t_1\approx t_2|_{\Phi}=1$. From Theorem \ref{pavelka} we have $||t_1\approx t_2||_{\Phi}=1$. Since $||\Phi||^{[0,1]_{\text{RPL}}}_{\mathrm{\mathbf{N}}}=1$, it follows that $||t_1\approx t_2||^{[0,1]_{\text{RPL}}}_{\mathrm{\mathbf{N}}}=1$ and, in particular, $||t_1\approx t_2||^{[0,1]_{\text{RPL}}}_{\mathrm{\mathbf{N}},v}=1$. From this and the fact that $\langle[0,1]_{\text{RPL}},\mathrm{\mathbf{N}}\rangle$ is reduced, we deduce, by \cite[Lemma 20]{De12}, that $||t_1||^{[0,1]_{\text{RPL}}}_{\mathrm{\mathbf{N}},v}=||t_2||^{[0,1]_{\text{RPL}}}_{\mathrm{\mathbf{N}},v}$, i.e., $g(\overline{t_1})=g(\overline{t_2})$. \smallskip

The task is now to see that $g$ satisfies the conditions (1) and (2) of Definiton \ref{def:mapping structures}. For any $0$-function symbol $c$, $ c_{\mathrm{\mathbf{T}}^{\Phi}}=\overline{c}=c_{\textbf{N}}$ by Definition \ref{term structure}. Let $\overline{t_1},\ldots,\overline{t_n}\in T^{\Phi}$ and $F$ be an $n$-ary function symbol, $F_{\mathrm{\mathbf{T}}^{\Phi}}(\overline{t_1},\ldots,\overline{t_n})=\overline{F(t_1,\ldots,t_n)}$ by Definition \ref{term structure}. Then, by the definition of $g$,

\begin{center} $g(F_{\mathrm{\mathbf{T}}^{\Phi}}(\overline{t_1},\ldots,\overline{t_n}))=g(\overline{F(t_1,\ldots,t_n)})=$\end{center}
\begin{center} $F_{\textbf{N}}(|| t_1||^{[0,1]_{\text{RPL}}}_{\mathrm{\mathbf{N}},v},\ldots ,|| t_n ||^{[0,1]_{\text{RPL}}}_{\mathrm{\mathbf{N}},v})=F_{\textbf{N}}(g(\overline{t_1}),\ldots,g(\overline{t_n}))$. \end{center}

\smallskip

Let $P$ be an $n$-ary predicate symbol such that $P_{\mathrm{\mathbf{T}}^{\Phi}}(\overline{t_1},\ldots,\overline{t_n})=1$. By Definition \ref{term structure} and Theorem \ref{pavelka}, $1=P_{\mathrm{\mathbf{T}}^{\Phi}}(\overline{t_1},\ldots,\overline{t_n})=|P(t_1,\ldots,t_n)|_{\Phi}=||P(t_1,\ldots,t_n)||_{\Phi}$.

\noindent Consequently, $||P(t_1,\ldots,t_n)||^{[0,1]_{\text{RPL}}}_{\mathrm{\mathbf{N}}}=1$, because $||\Phi||^{[0,1]_{\text{RPL}}}_{\mathrm{\mathbf{N}}}=1$. Thus \newline
$|| P(t_1,\ldots,t_n)||^{[0,1]_{\text{RPL}}}_{\mathrm{\mathbf{N}},v}=1$. Therefore $P_{\mathrm{\mathbf{N}}}(|| t_1||^{[0,1]_{\text{RPL}}}_{\mathrm{\mathbf{N}},v},\ldots ,|| t_n ||^{[0,1]_{\text{RPL}}}_{\mathrm{\mathbf{N}},v})=1$, that is, $P_{\mathrm{\mathbf{N}}}(g(\overline{t_1}),\ldots,g(\overline{t_n}))=1$.

\smallskip

Finally, since the set $\{\overline{x}\mid x\text{ is a variable}\}$ generates the universe $T^{\Phi}$ of the term structure associated to $\Phi$, $g$ is the unique homomorphism such that for every variable $x$, $g(\overline{x})=v(x)$.   
\end{proof}

\bigskip

Observe that in languages in which the similarity symbol is interpreted by the crisp identity, by using an analogous argument to the one in Theorem \ref{free thm}, we obtain that the term structure is free in the class of all the models $\langle[0,1]_{\text{RPL}},\mathrm{\mathbf{M}}\rangle$ of the theory and not only in the class of the reduced ones.

\section{RPL$\forall$-Horn Clauses}

\noindent In the previous section we have seen that if the term structure associated to a theory $\Phi$ is a model of $\Phi$, then the structure is free in the class of all models of $\Phi$. In this section, we show in Theorem \ref{model} that whenever $\Phi$ is a theory of RPL$\forall$-Horn clauses, $\langle[0,1]_{\text{RPL}},\textbf{T}^{\Phi}\rangle$ is a model of $\Phi$. Theorem \ref{model} gains in interest if we realize that it proves (using Theorem \ref{free thm}) the existence of free models of theories of RPL$\forall$-Horn clauses. Let us first define the notion of RPL$\forall$-Horn clauses. \smallskip

In predicate classical logic, a \emph{basic Horn formula} is a formula of the form $ \alpha_{1}\wedge\dotsb \wedge\alpha_{n}\rightarrow\beta$, where $n$ is a natural number and $\alpha_1,\ldots,\alpha_n,\beta$ are atomic formulas. Notice that there is not a unique way to extend this definition in fuzzy logics, where we have different conjunctions and implications. In this section we present one way to define Horn clauses over RPL$\forall$ extending the classical definition.

\smallskip

\begin{defi}\textbf{Basic RPL$\forall$-Horn Formula}\label{strong basic} A \emph{basic \emph{RPL}$\forall$-Horn formula} is a formula of the form
 $$(\alpha_1,r_1)\&\dotsb\&(\alpha_n,r_n)\rightarrow(\beta,s)$$ 

where $(\alpha_1,r_1)\ldots,(\alpha_n,r_n),(\beta,s)$ are evaluated atomic formulas and $n$ is a natural number. Observe that $n$ can be $0$. In that case the basic RPL$\forall$-Horn formula is an evaluated atomic formula.

\end{defi} 

\begin{defi} \textbf{Quantifier-free RPL$\forall$-Horn Formula} \label{qf Horn} A \emph{quantifier-free \emph{RPL}$\forall$-Horn formula} is a formula of the form $\phi_1\&\dotsb\&\phi_m$, where $m$ is a natural number and $\phi_i$ is a basic RPL$\forall$-Horn formula for every $1\leq i\leq m$. 
\end{defi}

\begin{defi} \textbf{RPL$\forall$-Horn Clause}     \label{Horn} 
A \emph{\emph{RPL}$\forall$-Horn clause} is a formula of the form $Q\gamma$, where $Q$ is a (possibly empty) string of universal quantifiers $(\forall x)$ and $\gamma$ is a quantifier-free RPL$\forall$-Horn formula. \end{defi}

%\noindent Now we give some examples of RPL$\forall$-Horn clauses:

\begin{ex} \label{example}
Let $\mathcal{P}$ be a predicate language with a unary predicate symbol $P$, a binary predicate symbol $R$ and $a$ an individual constant. The following formulas are examples of RPL$\forall$-Horn clauses: \begin{itemize}

\item[(1)] $(P(a),0.5)$,

\item[(2)] $(P(a),0.6)\&(R(a,x),0.3)$,

\item[(3)] $(P(a),0.5)\rightarrow(R(a,a),0.1)$,

\item[(4)] $(P(a),0.6)\&(R(a,x),0.3)\rightarrow(P(x),0.8)$,

\item[(5)] $(\forall x)((P(x),0.6)\&(R(a,x),0.3))$,

\item[(6)] $(\forall x)((P(x),0.6)\&(R(a,x),0.3)\rightarrow(P(a),0.9))$.

\end{itemize}
\end{ex}

Observe that, in general, RPL$\forall$-Horn clauses are not evaluated, only the atomic RPL$\forall$-Horn clauses  are evaluated formulas.

A weak version of RPL$\forall$-Horn clauses can be introduced by substituting each strong conjunction $\&$ appearing in the formula by the weak conjunction $\wedge$. Although in this paper we do not present this weak version, all the results we prove are also true for weak RPL$\forall$-Horn clauses. In classical logic, the set of all Horn clauses is recursively defined, because the formula $(\forall x)(\varphi\wedge\psi)$ is logically equivalent to $(\forall x)\varphi\wedge(\forall x)\psi$. In RPL$\forall$ these two formulas are also logically equivalent, so the set of the weak version of fuzzy RPL$\forall$-Horn clauses is recursively definable. However, this is not the case for fuzzy RPL$\forall$-Horn clauses. Indeed, let $P$ and $R$ be unary predicate symbols, consider the structure $\langle[0,1]_{\text{RPL}},\textbf{M}\rangle$ such that  $M=\{a,b\}$, $ P_{\textbf{M}}(a)=R_{\textbf{M}}(b)=0.4$ and $P_{\textbf{M}}(b)=R_{\textbf{M}}(a)=0.7$. Then, $||(\forall x)((P(x),1)\&(R(x),1))||^{[0,1]_{\text{RPL}}}_{\textbf{M}}=0.1$, but $||(\forall x)((P(x),1))\&(\forall x)((R(x),1))||^{[0,1]_{\text{RPL}}}_{\textbf{M}}=0$.

We now see that for any consistent theory of RPL$\forall$-Horn clauses $\Phi$, the term structure associated to $\Phi$ is a model of $\Phi$. To show that, we need the following lemmas and the notion of rank of a formula. Our definition of rank  is a variant of the notion of \emph{syntactic degree of a formula} of \cite[Def. 5.6.7]{Ha98}). Let $\varphi$ be a formula, the \emph{rank} of $\varphi$, denoted by $rk(\varphi)$ is defined by:
\begin{itemize} 
\item $rk(\varphi)=0$ if $\varphi$ is atomic;
\item $rk(\neg\varphi)=rk((\exists x)\varphi)=rk((\forall x)\varphi)=rk(\varphi)+1$;
\item $rk(\varphi\circ\psi)=rk(\varphi)+rk(\psi)$ for every binary propositional connective $\circ$.
\end{itemize}

\noindent Note that since the set of RPL$\forall$-Horn clauses is not recursively definable, induction on the complexity of the clause cannot be applied. Hence it is applied on the rank of the clauses. Such induction can be used to prove next lemma. 

%By induction on the rank of a formula it is easy to prove that:
\begin{lemma} \label{Horn substitucio}
Let $\varphi$ be an RPL$\forall$-Horn clause where $x_1,\ldots,x_m$ are pairwise distinct free variables. Then, for every terms $t_1,\ldots,t_m$, the substitution $$\varphi (t_1,\ldots,t_m/x_1,\ldots,x_m)$$ is an RPL$\forall$-Horn clause.
\end{lemma}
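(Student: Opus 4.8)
The plan is to prove the statement by induction on the rank $rk(\varphi)$ of the RPL$\forall$-Horn clause $\varphi$, following the remark that precedes the lemma: because the set of RPL$\forall$-Horn clauses is not recursively definable, ordinary structural induction on clause complexity is unavailable, but the rank is a genuine natural-number measure that decreases under the syntactic decompositions we need, so induction on $rk(\varphi)$ is the right tool. Throughout I write $\varphi(t_1,\ldots,t_m/x_1,\ldots,x_m)$ for the simultaneous substitution of the terms $t_i$ for the free variables $x_i$. The goal at each stage is to check that substitution commutes with the formation rules of Definitions \ref{strong basic}, \ref{qf Horn} and \ref{Horn}, so that the substituted formula is again built by those same rules and hence is again an RPL$\forall$-Horn clause.

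First I would treat the base case, where $\varphi$ has rank $0$. By the definition of rank, $rk(\varphi)=0$ forces $\varphi$ to be atomic; but an atomic RPL$\forall$-Horn clause is an evaluated atomic formula $(\alpha,r)$, i.e.\ a formula $\overline{r}\rightarrow\alpha$ with $\alpha$ atomic. Substituting terms for the free variables of $\alpha$ yields $\overline{r}\rightarrow\alpha(t_1,\ldots,t_m/x_1,\ldots,x_m)$; since substitution of terms into an atomic formula produces an atomic formula and leaves the truth constant $\overline{r}$ untouched, the result is again an evaluated atomic formula, hence a basic RPL$\forall$-Horn formula with $n=0$, hence an RPL$\forall$-Horn clause. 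This settles the base case.

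For the inductive step I would split according to the outermost construction of $\varphi$. If $\varphi$ begins with a universal quantifier, $\varphi = (\forall x)\psi$, then $x$ is bound, so (renaming $x$ if necessary so that it differs from every $x_i$ and does not occur in any $t_j$, to keep the substitution admissible) the substitution pushes inside the quantifier: $\varphi(\bar t/\bar x) = (\forall x)\,\psi(\bar t/\bar x)$. Since $rk(\psi)=rk(\varphi)-1$, the induction hypothesis applies to $\psi$ and gives that $\psi(\bar t/\bar x)$ is an RPL$\forall$-Horn clause; prefixing a universal quantifier keeps it one by Definition \ref{Horn}. If instead $\varphi$ is quantifier-free, it has the form $\phi_1\&\dotsb\&\phi_m$ with each $\phi_i$ a basic RPL$\forall$-Horn formula, and substitution distributes over the strong conjunction, $\varphi(\bar t/\bar x)=\phi_1(\bar t/\bar x)\&\dotsb\&\phi_m(\bar t/\bar x)$; since each $rk(\phi_i)<rk(\varphi)$ (or one reduces each $\phi_i$ directly), it suffices to handle a single basic RPL$\forall$-Horn formula. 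For such a $\phi=(\alpha_1,r_1)\&\dotsb\&(\alpha_n,r_n)\rightarrow(\beta,s)$, substitution acts only on the atomic formulas $\alpha_i,\beta$, sending each evaluated atomic formula to an evaluated atomic formula exactly as in the base case and leaving the connective structure and the truth constants $\overline{r_i},\overline{s}$ fixed; hence $\phi(\bar t/\bar x)$ is again a basic RPL$\forall$-Horn formula.

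The main obstacle, and the point that requires care rather than routine checking, is the interaction of substitution with the binding of variables in the quantifier case: I must ensure that the simultaneous substitution is \emph{admissible}, i.e.\ that the terms $t_j$ are substitutable for the $x_i$ in the sense of axiom $\forall 1$, so that no variable of a $t_j$ is captured by a quantifier of $\varphi$. This is the usual side condition on substitution and is handled by renaming bound variables; once admissibility is secured the commutation of substitution with $(\forall x)$ and with $\&$ is purely syntactic, and the induction on rank closes. I would state the admissibility convention explicitly at the start of the proof so that the quantifier step goes through cleanly.
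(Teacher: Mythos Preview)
Your approach---induction on the rank---is exactly the one the paper indicates (the paper itself gives no detailed proof, only the remark you quote). However, your reading of the rank function is off. Under the paper's definition $rk(\varphi\circ\psi)=rk(\varphi)+rk(\psi)$ (a \emph{sum}, not $\max+1$), every truth constant and every atomic formula has rank $0$, and hence so does every evaluated atomic formula, every basic RPL$\forall$-Horn formula, and every quantifier-free RPL$\forall$-Horn formula. In other words, for an RPL$\forall$-Horn clause the rank is simply the length of the universal quantifier prefix. Consequently your claim that ``$rk(\varphi)=0$ forces $\varphi$ to be atomic'' is false, and in your treatment of the quantifier-free conjunction the inequality $rk(\phi_i)<rk(\varphi)$ you invoke does not hold (both sides are $0$).

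This is an organizational rather than a substantive gap: you do cover every syntactic shape, and your parenthetical ``or one reduces each $\phi_i$ directly'' is in fact the only available route. The fix is to place \emph{all} of the quantifier-free analysis (evaluated atomic formulas, basic Horn formulas, and their $\&$-conjunctions) into the base case $rk(\varphi)=0$---exactly as the paper does in the proof of Theorem~\ref{model}---and reserve the inductive step purely for peeling off one universal quantifier. With that reorganization, together with the admissibility/renaming convention you already note, the argument is complete and matches the paper's intended proof.
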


\begin{lemma} \label{lemma th}
For any consistent theory $\Phi$ and any evaluated atomic formula $(\varphi,s)$, 
$$||(\varphi,s)||^{[0,1]_{\emph{RPL}}}_{\emph{\textbf{T}}^{\Phi}}=||(\varphi,s)||_{\Phi}.$$

\end{lemma}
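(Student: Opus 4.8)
The plan is to compute the truth value of $(\varphi,s)$ in the term structure directly from the definitions, identify it with the provability degree $|(\varphi,s)|_{\Phi}$, and finish by Pavelka-style completeness (Theorem \ref{pavelka}). Write $\varphi=P(t_1,\ldots,t_n)$ and let $x_1,\ldots,x_m$ be the free variables occurring in $\varphi$.

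First I would fix an arbitrary $\mathbf{T}^{\Phi}$-evaluation $v$ and show that $||(\varphi,s)||^{[0,1]_{\text{RPL}}}_{\mathbf{T}^{\Phi},v}=|(\sigma\varphi,s)|_{\Phi}$, where $\sigma$ is any substitution sending each $x_i$ to a term $u_i$ with $v(x_i)=\overline{u_i}$ (such representatives exist because every element of $T^{\Phi}$ is a $\sim$-class of a term). The key computation is that, by a routine induction on terms extending Lemma \ref{terms}(i) and using the interpretation of function symbols in Definition \ref{term structure}, $||t_j||^{[0,1]_{\text{RPL}}}_{\mathbf{T}^{\Phi},v}=\overline{\sigma t_j}$; hence by Definition \ref{term structure}, $||\varphi||^{[0,1]_{\text{RPL}}}_{\mathbf{T}^{\Phi},v}=P_{\mathbf{T}^{\Phi}}(\overline{\sigma t_1},\ldots,\overline{\sigma t_n})=|\sigma\varphi|_{\Phi}$, where independence from the chosen representatives $u_i$ is guaranteed by Lemma \ref{compatible}. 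Evaluating the implication $\overline{s}\rightarrow\varphi$ and the constant $\overline{s}$ with the standard RPL semantics gives $||(\varphi,s)||^{[0,1]_{\text{RPL}}}_{\mathbf{T}^{\Phi},v}=\min\{1-s+|\sigma\varphi|_{\Phi},1\}$, and Theorem \ref{pavelka} together with the semantics of $\rightarrow$ (the map $x\mapsto\min\{1-s+x,1\}$ being monotone and continuous, hence commuting with infima) turns the right-hand side into $|(\sigma\varphi,s)|_{\Phi}$. Taking the infimum over all evaluations $v$, equivalently over all substitutions $\sigma$, yields $||(\varphi,s)||^{[0,1]_{\text{RPL}}}_{\mathbf{T}^{\Phi}}=\inf_{\sigma}|(\sigma\varphi,s)|_{\Phi}$.

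It then remains to prove $\inf_{\sigma}|(\sigma\varphi,s)|_{\Phi}=|(\varphi,s)|_{\Phi}$. The inequality $\leq$ is immediate by taking $\sigma$ to be the identity substitution, corresponding to the canonical evaluation $e^{\Phi}$. For $\geq$ I would argue that, for every $\sigma$ and every rational $r$ with $\Phi\vdash\overline{r}\rightarrow(\overline{s}\rightarrow\varphi)$, one also has $\Phi\vdash\overline{r}\rightarrow(\overline{s}\rightarrow\sigma\varphi)$: since $\Phi$ is a set of sentences, Generalization applies to each free variable $x_i$, giving $\Phi\vdash(\forall x_1)\cdots(\forall x_m)(\overline{r}\rightarrow(\overline{s}\rightarrow\varphi))$, and then repeated use of axiom $\forall 1$ and Modus Ponens instantiates the $x_i$ by the terms $\sigma x_i$. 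Because $\varphi$ is atomic and $\overline{r},\overline{s}$ are constants, the instantiated formula contains no quantifiers, so substitutability is automatic and no variable capture occurs (simultaneous substitution being realized, if needed, through fresh auxiliary variables). Hence $|(\varphi,s)|_{\Phi}\leq|(\sigma\varphi,s)|_{\Phi}$ for every $\sigma$, and taking the infimum gives the reverse inequality. Combining the two displays and applying Theorem \ref{pavelka} once more, $||(\varphi,s)||^{[0,1]_{\text{RPL}}}_{\mathbf{T}^{\Phi}}=|(\varphi,s)|_{\Phi}=||(\varphi,s)||_{\Phi}$, as desired.

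I expect the main obstacle to be the $\geq$ direction of the final identity, namely the transfer principle that substitution instances of $\Phi$-provable formulas remain $\Phi$-provable. This is exactly where the hypothesis that $\Phi$ consists of sentences (so that Generalization is unrestricted) and the atomicity of $\varphi$ (so that $\forall 1$ applies without substitutability obstructions) are essential; the term-value computation and the manipulation of the implication are routine by comparison.
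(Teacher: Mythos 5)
Your proof is correct and, at its core, does the same job as the paper's: it reduces the value of $(\varphi,s)$ in the term structure to the provability degree of the underlying atomic formula and closes the gap with Pavelka-style completeness (Theorem \ref{pavelka}). The organization, however, is genuinely different. The paper fixes a rational threshold $t$ and runs a chain of equivalences, using the book-keeping axioms for truth constants (\cite[Def.2.2.4]{Ha98}) to convert $||(\varphi,s)||^{[0,1]_{\text{RPL}}}_{\mathrm{\mathbf{T}}^{\Phi}}\geq t$ into a condition on $||\varphi||^{[0,1]_{\text{RPL}}}_{\mathrm{\mathbf{T}}^{\Phi}}$ and back, so it never evaluates the implication explicitly; you instead compute $\overline{s}\rightarrow\varphi$ directly and commute $x\mapsto\min\{1-s+x,1\}$ with the infimum by monotonicity and continuity. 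More substantively, the paper's middle equivalence silently identifies $||\varphi||^{[0,1]_{\text{RPL}}}_{\mathrm{\mathbf{T}}^{\Phi}}$ (an infimum over \emph{all} $\mathrm{\mathbf{T}}^{\Phi}$-evaluations) with $|\varphi|_{\Phi}$, which Lemma \ref{terms} only delivers at the canonical evaluation $e^{\Phi}$; your substitution argument --- each evaluation corresponds to a substitution $\sigma$, and $|(\varphi,s)|_{\Phi}\leq|(\sigma\varphi,s)|_{\Phi}$ by Generalization, $\forall 1$ and Modus Ponens --- is precisely what justifies that identification when $\varphi$ has free variables, so your version fills in a step the paper leaves implicit (and which is harmless in the paper's application, where only sentences arise). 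One cosmetic quibble: in H\'ajek-style systems Generalization is available for any variable in a derivation from a theory, since truth of a formula in a model already means truth under all evaluations, so the appeal to $\Phi$ consisting of sentences is not actually needed, though it does no harm.
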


\begin{proof}
It is enough to show that for any rational number $t\in[0,1]$, $||(\varphi,s)||^{[0,1]_{\text{RPL}}}_{\textbf{T}^{\Phi}}\geq t \text{ iff } ||(\varphi,s)||_{\Phi}\geq t.$ Let $t\in[0,1]$ be a rational number, we have: $$||(\varphi,s)||^{[0,1]_{\text{RPL}}}_{\textbf{T}^{\Phi}}\geq t \text{ iff } ||\overline{t}\rightarrow(\overline{s}\rightarrow\varphi)||^{[0,1]_{\text{RPL}}}_{\textbf{T}^{\Phi}}=1 \text{ iff }$$ 
%$$||\overline{t}\&\overline{s}\rightarrow\varphi)||^{[0,1]_{\text{RPL}}}_{\textbf{T}^{\Phi}}=1 \text{ iff } ||\varphi||^{[0,1]_{\text{RPL}}}_{\textbf{T}^{\Phi}}\geq t*_{\text{\L}}s.$$
$$||\overline{t}\&\overline{s}\rightarrow\varphi||^{[0,1]_{\text{RPL}}}_{\textbf{T}^{\Phi}}=1 \text{ iff } ||\varphi||^{[0,1]_{\text{RPL}}}_{\textbf{T}^{\Phi}}\geq t*_{\text{\L}}s\text{ iff}$$
$$||\varphi||^{[0,1]_{\text{RPL}}}_{\textbf{M}}\geq t*_{\text{\L}}s\text{ for every model } \langle[0,1]_{\text{RPL}},\textbf{M}\rangle \text{ of } \Phi\text{ iff}$$
$$\text{ for any model } \langle[0,1]_{\text{RPL}},\textbf{M}\rangle \text{ of } \Phi,$$
$$||\overline{t}\rightarrow(\overline{s}\rightarrow\varphi)||^{[0,1]_{\text{RPL}}}_{\textbf{M}}=1.$$
\noindent The second and latter equivalence are proved by using \cite[Def.2.2.4 (Axioms 5a and 5b)]{Ha98}. The latter expression is equivalent to $||(\varphi,s)||^{[0,1]_{\text{RPL}}}_{\textbf{M}}\geq t$ for every model $\langle[0,1]_{\text{RPL}},\textbf{M}\rangle$ of $\Phi$, i.e., $||(\varphi,s)||_{\Phi}\geq t.$ \end{proof}

%the fact that for every model $\langle[0,1]_{\text{RPL}},\textbf{M}\rangle$ of $\Phi$, $||(\varphi,s)||^{[0,1]_{\text{RPL}}}_{\textbf{M}}\geq t$ and thus, by Definition \ref{term structure}, that holds iff $||(\varphi,s)||_{\Phi}\geq t.$ \end{proof}

\bigskip

\begin{lemma} \label{desigualtat}
For any consistent theory $\Phi$ and any evaluated atomic sentences \newline {\footnotesize $(\varphi_1,s_1),\dots,(\varphi_n,s_n)$},   {\footnotesize $$||(\varphi_1,s_1)\&\dotsb\&(\varphi_n,s_n)||^{[0,1]_{\emph{RPL}}}_{\emph{\textbf{T}}^{\Phi}}\leq||(\varphi_1,s_1)\&\dotsb\&(\varphi_n,s_n)||_{\Phi}.$$}
\end{lemma}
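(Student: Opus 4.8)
The plan is to reduce the $n$-fold conjunction to the single-formula case already settled in Lemma \ref{lemma th}, exploiting the fact that each $(\varphi_i,s_i)$ is a sentence, so its truth value does not depend on any evaluation. First I would record the key semantic fact that, in any structure $\langle[0,1]_{\text{RPL}},\textbf{X}\rangle$ and for sentences $\psi_1,\ldots,\psi_n$, the truth value of the strong conjunction decomposes as an iterated \L ukasiewicz product,
$$||\psi_1\&\dotsb\&\psi_n||^{[0,1]_{\text{RPL}}}_{\textbf{X}}=||\psi_1||^{[0,1]_{\text{RPL}}}_{\textbf{X}}*_{\text{\L}}\dotsb*_{\text{\L}}||\psi_n||^{[0,1]_{\text{RPL}}}_{\textbf{X}},$$
which follows by associativity from the truth-value formula for $\&$ stated in the preliminaries (the evaluation is irrelevant since each $\psi_i$ is a sentence). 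Applying this with $\textbf{X}=\textbf{T}^{\Phi}$ and then invoking Lemma \ref{lemma th} on each conjunct rewrites the left-hand side of the claim as $||(\varphi_1,s_1)||_{\Phi}*_{\text{\L}}\dotsb*_{\text{\L}}||(\varphi_n,s_n)||_{\Phi}$.

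Next I would expand the right-hand side using Definition \ref{provability}(i): it is the infimum over all models $\langle[0,1]_{\text{RPL}},\textbf{M}\rangle$ of $\Phi$ of $||(\varphi_1,s_1)\&\dotsb\&(\varphi_n,s_n)||^{[0,1]_{\text{RPL}}}_{\textbf{M}}$, and by the same decomposition this equals $\inf_{\textbf{M}}\bigl(a_1(\textbf{M})*_{\text{\L}}\dotsb*_{\text{\L}}a_n(\textbf{M})\bigr)$, where I write $a_i(\textbf{M})=||(\varphi_i,s_i)||^{[0,1]_{\text{RPL}}}_{\textbf{M}}$, so that $\inf_{\textbf{M}}a_i(\textbf{M})=||(\varphi_i,s_i)||_{\Phi}$ again by Definition \ref{provability}(i). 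The claim then reduces to the purely order-theoretic inequality
$$\bigl(\inf_{\textbf{M}}a_1(\textbf{M})\bigr)*_{\text{\L}}\dotsb*_{\text{\L}}\bigl(\inf_{\textbf{M}}a_n(\textbf{M})\bigr)\ \le\ \inf_{\textbf{M}}\bigl(a_1(\textbf{M})*_{\text{\L}}\dotsb*_{\text{\L}}a_n(\textbf{M})\bigr).$$
This follows from the monotonicity of $*_{\text{\L}}$: for each fixed model $\textbf{M}$ and each $i$ we have $||(\varphi_i,s_i)||_{\Phi}\le a_i(\textbf{M})$, so non-decreasingness of the \L ukasiewicz product in every argument gives $||(\varphi_1,s_1)||_{\Phi}*_{\text{\L}}\dotsb*_{\text{\L}}||(\varphi_n,s_n)||_{\Phi}\le a_1(\textbf{M})*_{\text{\L}}\dotsb*_{\text{\L}}a_n(\textbf{M})$; taking the infimum over $\textbf{M}$ on the right yields the displayed inequality, and hence the lemma.

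I expect the only genuinely delicate point to be understanding why the statement is an inequality rather than an equality: the infima defining the various $||(\varphi_i,s_i)||_{\Phi}$ may be witnessed by \emph{different} models, so the product of the pointwise infima can lie strictly below the infimum of the products, and only $\le$ can be asserted in general. Everything else is routine, namely the decomposition of $\&$ over sentences, the identification of the single-conjunct truth values through Lemma \ref{lemma th}, and the monotonicity estimate. In particular no continuity of $*_{\text{\L}}$ is required, only that it is non-decreasing in each coordinate.
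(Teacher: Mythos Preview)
Your proposal is correct and follows essentially the same route as the paper: decompose the strong conjunction into an iterated \L ukasiewicz product, apply Lemma~\ref{lemma th} to each conjunct to rewrite the left-hand side as $||(\varphi_1,s_1)||_{\Phi}*_{\text{\L}}\dotsb*_{\text{\L}}||(\varphi_n,s_n)||_{\Phi}$, and then use monotonicity of $*_{\text{\L}}$ together with $||(\varphi_i,s_i)||_{\Phi}\le ||(\varphi_i,s_i)||^{[0,1]_{\text{RPL}}}_{\textbf{M}}$ for each model $\textbf{M}$ of $\Phi$ before taking the infimum. The paper presents the case $n=2$ and remarks that the general case is analogous, while you write out the general $n$ directly and add a helpful comment on why only $\le$ holds, but the argument is the same.
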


\begin{proof}
By Lemma \ref{lemma th}, it is clear for $n=1$. For the sake of clarity, we present the proof for the case $n=2$, but the argument is analogous for the cases with $n> 2$. First, by Lemma \ref{lemma th} we have:\newline\newline
{\footnotesize $||(\varphi_1,s_1)\&(\varphi_2,s_2)||^{[0,1]_{\text{RPL}}}_{\textbf{T}^{\Phi}}$$=||(\varphi_1,s_1)||^{[0,1]_{\text{RPL}}}_{\textbf{T}^{\Phi}}*_{\text{\L}}||(\varphi_2,s_2)||^{[0,1]_{\text{RPL}}}_{\textbf{T}^{\Phi}}=$
$||(\varphi_1,s_1)||_{\Phi}*_{\text{\L}}||(\varphi_2,s_2)||_{\Phi}.$
}

\bigskip

\noindent Since for any model {\footnotesize$\langle[0,1]_{\text{RPL}},\textbf{M}\rangle$} of $\Phi$, {\footnotesize $||(\varphi_1,s_1)||_{\Phi}\leq||(\varphi_1,s_1)||^{[0,1]_{\text{RPL}}}_{\textbf{M}}$} and {\footnotesize$||(\varphi_2,s_2)||_{\Phi}\leq||(\varphi_2,s_2)||^{[0,1]_{\text{RPL}}}_{\textbf{M}}$}, we have that for any model {\footnotesize $\langle[0,1]_{\text{RPL}},\textbf{M}\rangle$} of $\Phi$,
{\footnotesize
 $$||(\varphi_1,s_1)||_{\Phi}*_{\text{\L}}||(\varphi_2,s_2)||_{\Phi}\leq ||(\varphi_1,s_1)||^{[0,1]_{\text{RPL}}}_{\textbf{M}}*_{\text{\L}}||(\varphi_2,s_2)||^{[0,1]_{\text{RPL}}}_{\textbf{M}}=$$
 $ ||(\varphi_1,s_1)\&(\varphi_2,s_2)||^{[0,1]_{\text{RPL}}}_{\textbf{M}}.$
}
\newline 

\noindent Therefore, since $||(\varphi_1,s_1)\&(\varphi_2,s_2)||_{\Phi}$ is the infimum, we have \newline\newline {\footnotesize $||(\varphi_1,s_1)||_{\Phi}*_{\L}||(\varphi_2,s_2)||_{\Phi}\leq ||(\varphi_1,s_1)\&(\varphi_2,s_2)||_{\Phi}$. 
}

\smallskip 
\smallskip

\noindent Consequently, \newline\newline {\footnotesize $||(\varphi_1,s_1)\&(\varphi_2,s_2)||^{[0,1]_{\text{RPL}}}_{\textbf{T}^{\Phi}}\leq  ||(\varphi_1,s_1)\&(\varphi_2,s_2)||_{\Phi}$.}
\end{proof}

\smallskip

\begin{theorem} \label{model}
Let $\Phi$ be a consistent theory. For every RPL$\forall$-Horn clause $\varphi$ without free variables,  $$\text{If } |\varphi|_{\Phi}=1\text{, then } ||\varphi||^{[0,1]_{\emph{RPL}}}_{\emph{\textbf{T}}^{\Phi}}=1.$$ \end{theorem}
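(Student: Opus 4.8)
The plan is to prove the statement by induction on the rank $rk(\varphi)$ of the RPL$\forall$-Horn clause $\varphi$, since Lemma \ref{Horn substitucio} guarantees that substitution preserves Horn-clause-hood (and hence the class is closed under the syntactic manipulations the induction will require) even though the class is not recursively definable in the usual sense. The base cases are where $\varphi$ has rank $0$, i.e. $\varphi$ is itself an evaluated atomic formula $(\beta,s)$; here the conclusion follows directly from Lemma \ref{terms}(iii), which states precisely that $||(\beta,s)||^{[0,1]_{\text{RPL}}}_{\mathrm{\mathbf{T}}^{\Phi},e^{\Phi}}=1$ iff $|(\beta,s)|_{\Phi}=1$, together with the observation that for a sentence the value under the canonical evaluation agrees with the global truth value $||\cdot||^{[0,1]_{\text{RPL}}}_{\mathrm{\mathbf{T}}^{\Phi}}$.

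For the inductive step I would distinguish the shapes a Horn clause can take. If $\varphi=(\forall x)\psi$, then since $\varphi$ has no free variables and $|\varphi|_{\Phi}=1$, I would push the provability/truth assumption down to all instances $\psi(t/x)$, each of which is again a Horn clause by Lemma \ref{Horn substitucio} and of strictly smaller rank; the inductive hypothesis then gives value $1$ for each instance in $\mathrm{\mathbf{T}}^{\Phi}$, and because the universe $T^{\Phi}$ is generated by the classes $\overline{t}$ of closed terms (so the infimum defining $||(\forall x)\psi||$ ranges exactly over these instances), the universal quantifier evaluates to $1$. If $\varphi$ is a strong conjunction $\phi_1\&\dotsb\&\phi_m$ of basic Horn formulas, I would use that $|\varphi|_{\Phi}=1$ forces each $|\phi_i|_{\Phi}=1$ (in RPL, a strong conjunction has provability degree $1$ only when each conjunct does) and apply the inductive hypothesis componentwise, then recombine using the $\L$ukasiewicz semantics of $\&$. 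The remaining shape is a single basic Horn formula $(\alpha_1,r_1)\&\dotsb\&(\alpha_n,r_n)\rightarrow(\beta,s)$, which is the heart of the argument.

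For that core case the natural route is semantic and relies on Lemmas \ref{lemma th} and \ref{desigualtat}. Writing $A$ for the antecedent $(\alpha_1,r_1)\&\dotsb\&(\alpha_n,r_n)$ and $C$ for the consequent $(\beta,s)$, I would argue that in any model $\langle[0,1]_{\text{RPL}},\mathrm{\mathbf{M}}\rangle$ of $\Phi$ the hypothesis $|\varphi|_{\Phi}=1$ (equivalently $||\varphi||_{\Phi}=1$ by Theorem \ref{pavelka}) gives $||A||^{[0,1]_{\text{RPL}}}_{\mathrm{\mathbf{M}}}\le||C||^{[0,1]_{\text{RPL}}}_{\mathrm{\mathbf{M}}}$ via the $\L$ukasiewicz implication. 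Lemma \ref{lemma th} identifies $||C||^{[0,1]_{\text{RPL}}}_{\mathrm{\mathbf{T}}^{\Phi}}$ with $||C||_{\Phi}$, and Lemma \ref{desigualtat} gives $||A||^{[0,1]_{\text{RPL}}}_{\mathrm{\mathbf{T}}^{\Phi}}\le||A||_{\Phi}$; chaining these with the model-wise inequality and taking the infimum over all models of $\Phi$ should yield $||A||^{[0,1]_{\text{RPL}}}_{\mathrm{\mathbf{T}}^{\Phi}}\le||C||^{[0,1]_{\text{RPL}}}_{\mathrm{\mathbf{T}}^{\Phi}}$, which is exactly $||\varphi||^{[0,1]_{\text{RPL}}}_{\mathrm{\mathbf{T}}^{\Phi}}=1$. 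The main obstacle I anticipate is precisely this core step: one must be careful that the two lemmas point the inequality in compatible directions (Lemma \ref{desigualtat} only bounds the antecedent from above, not from below), so the argument must run through the class of all models of $\Phi$ rather than trying to compare antecedent and consequent directly inside $\mathrm{\mathbf{T}}^{\Phi}$. I would also take care, in the quantifier case, to justify that every element of $T^{\Phi}$ is the value $\overline{t}$ of some closed term so that the instances genuinely exhaust the infimum — this uses that constants are present or, more generally, that the domain is built from term classes.
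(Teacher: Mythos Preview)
Your overall strategy and the mathematical content of each case match the paper's proof almost exactly: induction on $rk(\varphi)$, the evaluated-atomic case via Lemma~\ref{terms}(iii), the basic Horn implication via Pavelka completeness together with Lemmas~\ref{lemma th} and~\ref{desigualtat} (your chaining $||A||^{[0,1]_{\text{RPL}}}_{\mathbf{T}^{\Phi}}\le||A||_{\Phi}\le||C||_{\Phi}=||C||^{[0,1]_{\text{RPL}}}_{\mathbf{T}^{\Phi}}$ is precisely what the paper does), the conjunction componentwise, and the quantifier by instantiation and Lemma~\ref{Horn substitucio}.

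Where you go astray is the organisation of the induction, and the error comes from misreading the rank function. By definition $rk(\varphi\circ\psi)=rk(\varphi)+rk(\psi)$ for \emph{every} binary connective, so an evaluated atomic formula, a basic Horn implication, and a $\&$-conjunction of basic Horn formulas \emph{all have rank $0$}; only the quantifier prefix increases rank. Hence your ``base case $rk(\varphi)=0$ means $\varphi$ is an evaluated atomic formula'' is false, and placing the implication and conjunction shapes in the inductive step is incorrect. Concretely, in your conjunction case you say you will ``apply the inductive hypothesis componentwise'', but each conjunct $\phi_i$ has rank $0$, the same as $\phi_1\&\dotsb\&\phi_m$, so no inductive hypothesis is available. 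The paper avoids this by treating all three quantifier-free shapes as subcases of rank $0$, first disposing of evaluated atoms and basic Horn implications, and then handling the conjunction by appealing to those two already-established subcases rather than to an IH. Your inductive step should therefore consist solely of the case $\varphi=(\forall x)\psi$.

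One minor point on the quantifier case: your worry about needing ``closed terms'' or constants is misplaced. The universe $T^{\Phi}$ consists of $\sim$-classes of \emph{all} terms, including terms with variables, and Lemma~\ref{terms}(i) gives $||t||^{[0,1]_{\text{RPL}}}_{\mathbf{T}^{\Phi},e^{\Phi}}=\overline{t}$ for every term $t$; this is what the paper uses to pass from $||\psi(t/x)||^{[0,1]_{\text{RPL}}}_{\mathbf{T}^{\Phi}}=1$ for all terms $t$ to $||(\forall x)\psi||^{[0,1]_{\text{RPL}}}_{\mathbf{T}^{\Phi}}=1$.
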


\begin{proof}
Let $\varphi$ be an RPL$\forall$-Horn clause without free variables. We proceed by induction on $rk(\varphi)$. \smallskip

\smallskip

\underline{$rk(\varphi)=0$.} We can distinguish three subcases: 

\smallskip

1) If $\varphi=(\psi,s)$ is an evaluated atomic formula, the statement holds by Lemma \ref{lemma th} (iii). \smallskip

%by hypothesis, $|(\psi,s)|_{\Phi}=1$ and consequently, by Lemma \ref{lemma th} (iii), $||(\psi,s)||^{[0,1]_{\text{RPL}}}_{\textbf{T}^{\Phi}}=1$. \bigskip 

2) Let $\varphi=(\psi_1,s_1)\&\dotsb\&(\psi_n,s_n)\rightarrow(\psi,s)$ be a basic RPL$\forall$-Horn formula, where $(\psi_1,s_1),\ldots,(\psi_n,s_n),(\psi,s)$ are evaluated atomic formulas. By hypothesis and Theorem \ref{pavelka} we have: 

\begin{center}$1=|(\psi_1,s_1)\&\dotsb\&(\psi_n,s_n)\rightarrow(\psi,s)|_{\Phi}=||(\psi_1,s_1)\&\dotsb\&(\psi_n,s_n)\rightarrow(\psi,s)||_{\Phi}$.\end{center}

\noindent Therefore, $||(\psi_1,s_1)\&\dotsb\&(\psi_n,s_n)||_{\Phi}\leq ||(\psi,s)||_{\Phi}$.

\smallskip

\noindent  By Lemmas \ref{lemma th} and \ref{desigualtat}, $|| (\psi,s)||^{[0,1]_{\text{RPL}}}_{\textbf{T}^{\Phi}}=||(\psi,s)||_{\Phi}$ and  
$$||(\psi_1,s_1)\&\dotsb\&(\psi_n,s_n)||^{[0,1]_{\text{RPL}}}_{\textbf{T}^{\Phi}}\leq ||(\psi_1,s_1)\&\dotsb\&(\psi_n,s_n)||_{\Phi}.$$

\noindent Therefore $||(\psi_1,s_1)\&\dotsb\&(\psi_n,s_n)||^{[0,1]_{\text{RPL}}}_{\textbf{T}^{\Phi}}\leq || (\psi,s)||^{[0,1]_{\text{RPL}}}_{\textbf{T}^{\Phi}}.$ That is, $$ ||(\psi_1,s_1)\&\dotsb\&(\psi_n,s_n)\rightarrow(\psi,s)||^{[0,1]_{\text{RPL}}}_{\textbf{T}^{\Phi}}=1.$$

\smallskip

3) If $\varphi=\phi_1\&\dotsb\&\phi_m$ is a conjunction of basic RPL$\forall$-Horn formulas, \begin{center}

$||\phi_1\&\dotsb\&\phi_m||^{[0,1]_{\text{RPL}}}_{\mathrm{\mathbf{T}}^{\Phi}}=1$ iff \end{center} %\newline
\begin{center}
$||\phi_i||^{[0,1]_{\text{RPL}}}_{\mathrm{\mathbf{T}}^{\Phi}}=1$ for every $1 \leq i \leq m$. 
%$|\phi_i|_{\Phi}=1$ for every $1\leq i\leq m$ iff $|\phi_1\&\dotsb\&\phi_m|_{\Phi}=1$.  
\end{center}

From 1) and 2), $|\phi_i|_{\Phi}=1$ for every $1\leq i\leq m$ and thus $|\phi_1\&\dotsb\&\phi_m|_{\Phi}=1$. \newline

%The second equivalence is from 1) and 2), and the last one holds by \cite[Lemma 3.3.8]{Ha98}. \newline

\underline{$rk(\varphi)=n+1$.} Let $\varphi=(\forall x)\psi$ be such that $\psi$ is an RPL$\forall$-Horn clause of rank $n$. Assume inductively that for any RPL$\forall$-Horn clause without free variables $\xi$ of rank less or equal than $n$ and such that $|\xi|_{\Phi}=1$, $||\xi||^{[0,1]_{\text{RPL}}}_{\mathrm{\mathbf{T}}^{\Phi}}=1$. By assumption and Axiom $\forall 1$, \begin{center}

$\Phi\vdash(\forall x)\psi\rightarrow\psi(t/x)$ for every term $t$.

\end{center}

From Axiom \L 2, $sup\{r\mid\Phi\vdash\overline{r}\rightarrow\varphi\}=1$ implies that $sup\{r\mid\Phi\vdash\overline{r}\rightarrow\psi(t/x)\}=1$ for any term $t$. That is, $|\psi(t/x)|_{\Phi}=1$ for every term $t$. \smallskip

Since $\psi$ has rank $n$ and is an RPL$\forall$-Horn clause by Lemma \ref{Horn substitucio}, we can apply the inductive hypothesis and conclude that $||\psi(t/x)||^{[0,1]_{\text{RPL}}}_{\mathrm{\mathbf{T}}^{\Phi}}=1$ for any term $t$. So, by Lemma \ref{terms} (i), $||\psi(x)||^{[0,1]_{\text{RPL}}}_{\mathrm{\mathbf{T}}^{\Phi},v[x\mapsto\overline{t}]}=1$ for every element $\overline{t}$ of the domain, and thus we get $||(\forall x)\psi||^{[0,1]_{\text{RPL}}}_{\mathrm{\mathbf{T}}^{\Phi}}=1$.
\end{proof}

\section{Conclusions and Future Work}

\noindent The present paper is another step towards a systematic study of theories of Horn clauses over predicate fuzzy logics from a model-theoretic point of view, a study that we started in \cite{CoDe16} and which is still in progress. In particular, here we  have proved the existence of free models of theories of Horn clauses in RPL$\forall$. \smallskip

Future work will be devoted to study the broad approach taken in \cite[Ch.8]{CiHaNo11} to fuzzy logics with enriched languages. We shall see if RPL$\forall$-Horn clauses introduced here can be generalized to that logics with enriched languages. Later, since one of our next goals is to solve the open problem (formulated by Cintula and H\'ajek in \cite{CiHa10}) about the characterization of theories of fuzzy Horn clauses in terms of quasivarieties, we will analyze quasivarieties and try to define them in the context of fuzzy logics using recent results on products over fuzzy logics like \cite{De12}.  \smallskip

Herbrand structures have been important in model theory and in the foundations of logic programming. Therefore, as a continuation of the present work, we would like to characterize the free Herbrand model in the class of the Herbrand models of theories of RPL$\forall$-Horn clauses without equality. Finally, we will focus on a generalization of Herbrand structure, \emph{fully named models}, in order to show that two types of minimality for these models (specifically free models and $A$-generic models) are equivalent.

% use section* for acknowledgment
%\ifCLASSOPTIONcompsoc
  % The Computer Society usually uses the plural form
  \section*{Acknowledgments}
%\else
  % regular IEEE prefers the singular form
%\fi
\noindent We would like to thank the referees for their useful comments. This project has received funding from the European Union's Horizon 2020 research and innovation program under the Marie Sklodowska-Curie grant agreement No 689176 (SYSMICS project). Pilar Dellunde is also partially supported by the project RASO TIN2015-71799-C2-1-P (MINECO/FEDER) and the grant 2014SGR-118 from the Generalitat de Catalunya.

\end{document}